\newtheorem{theorem}{Theorem}
\newtheorem{lemma}[theorem]{Lemma}
\theoremstyle{remark}
\newtheorem{remark}{Remark}
\newcommand{\myfig}[3][]{
 \begin{figure}
 \begin{center}
 {\mbox{\includegraphics[#1]{#2.eps}}}
 \end{center}
 \caption{\label{#2}#3}
 \end{figure}}
 \newcommand{\myfigh}[3][]{
 \begin{figure}[h!]
 \begin{center}
 {\mbox{\includegraphics[#1]{#2.eps}}}
 \end{center}
 \caption{\label{#2}#3}
 \end{figure}}
\def\vc#1{\mathbf #1}
\def\O{{\mathcal O}}
\long\def\ignore#1{}
\def\cc{\gamma}
\def\H{{\mathcal H}}
\def\nd{\noindent}
\def\fref#1{Figure~\ref{#1}}
\def\ss{\sigma}
 \providecommand{\Pic}{\mathop{\rm Pic}\nolimits}
 \providecommand{\Aut}{\mathop{\rm Aut}\nolimits}
\def\Bbb#1{{\mathbb #1}}
\def\D{{\mathcal D}}
\def\E{{\mathcal E}}
\def\dd{\delta}
\def\CC{{\Gamma}}
\def\K{{\mathcal K}}
\def\L{{\mathcal L}}
\def\hh{\hat h}
\def\ll{\lambda}
\begin{document}

\title{The Neron-Tate pairing and elliptic K3 surfaces}
\author{Arthur Baragar}
\begin{abstract}
In this paper, we demonstrate a connection between the group structure and Neron-Tate pairing on elliptic curves in an elliptic fibration with section on a K3 surface, and the structure of the ample cone for the K3 surface.  Part of the result can be thought of as a case of the specialization theorem.
\end{abstract}
\subjclass[2010]{14J28, 14J27, 14J50} \keywords{K3 surface, Neron-Tate pairing, Tate pairing, elliptic surface,
automorphism, Hausdorff dimension, ample cone, specialization theorem}
\address{Department of Mathematical Sciences, University of Nevada, Las Vegas, NV 89154-4020}
\email{baragar@unlv.nevada.edu}
\thanks{\nd \LaTeX ed \today.}

\maketitle

\section*{Introduction}

In an earlier work \cite{Bar11}, we described a hyperbolic cross section of the ample cone for a class of K3 surfaces, and came up with the Poincar\'e ball model in \fref{fig1a}.  If $X=X(\Bbb Q)$ is a K3 surface in this class, then $X$ is fibered by elliptic curves, all of which are in the divisor class represented by the divisor $[E]$ in \fref{fig1a}.  If we unfold the Poincar\'e ball into the Poincar\'e upper half space with $[E]$ the point at infinity, we get \fref{fig2c}.  The Euclidean structure of the boundary at infinity is rather captivating, and is related to the group structure of the elliptic curves in the fibration.

\myfigh{fig1a}{A hyperbolic cross section of the ample cone for a class of K3 surfaces, rendered in the Poincar\'e ball model.}

\myfig{fig2c}{The hyperbolic cross section of the same ample cone, unfolded into the upper half space model with $[E]$ the point at infinity.}

Let the Euclidean lattice we see in \fref{fig2c} be generated by the horizontal translation $t_1$ and diagonal translation $t_2$.
The fibration has a section $O$ with divisor class $[O]$.  The plane $[O]\cdot \vc x=0$ is a face of the ample cone, and is represented by the circle so noted in \fref{fig2c}.
Since $O$ is a smooth rational curve, $[O]\cdot [O]=-2$, and because it is a section, $[O]\cdot [E]=1$.
That is, for every elliptic curve $E$ in the divisor class $[E]$, there is a unique point $O_E$ in the intersection of $E$ with the curve $O$.
The section gives us a natural way of defining the zero element $O_E$ on every elliptic curve in the fibration.

The translates $D_1=t_1([O])$ and $D_2=t_2([O])$ give two more divisors with self intersection $-2$.  Since the corresponding faces ($D_i\cdot \vc x=0$) are walls of the ample cone, they both are represented with $-2$ curves \cite{Kov94}.  Since $t_i$ fixes $E$, we have $D_i\cdot [E]=t_i([O])\cdot [E]=[O]\cdot t_i^{-1}[E]=[O]\cdot [E]=1$, so both are sections.  For fixed $E$ in the fibration $[E]$, let $Q_{i,E}=D_i\cap E$, where we have abused notation by letting $D_i$ represent both a divisor and the unique $-2$ curve in the divisor class.  Let us use the points $Q_{i,E}$ (as $E$ varies in $[E]$) to define automorphisms $\tau_i\in \Aut(X/\Bbb Q)$ by
\[
\tau_i(P)=P+Q_{i,E},
\]
where $E$ is the fiber that contains $P$.  Then, as we will see in this paper, $\tau_{i*}=t_i$.  That is, the translation $t_i$ represents addition by $Q_{i,E}$ on the elliptic curve $E$.

Let $\vc v_i=D_i-[O]$.  Then $t_i$ restricted to the Euclidean plane (the boundary of the hyperbolic space) would appear to be translation in the direction $\vc v_i$, and this is indeed the case as we will see.  

Let $D$ be an ample divisor and $h_D$ a Weil height on $X$ associated to $D$.  For a fixed $E$ in $[E]$, we define the canonical height $\hat h$ on $E$ using $h_D$ restricted to $E$.  The Neron-Tate pairing on a pair of points $P_1$ and $P_2$ in the group $Q_{1,E}\Bbb Z\oplus Q_{2,E}\Bbb Z$ is given by
\begin{equation}\label{eq1}
\langle P_1,P_2\rangle = \hat h(P_1+P_2)-\hat h(P_1)-\hat h(P_2).
\end{equation}
Let us define the height of $E$ to be $h(E)=h_D(O_E)$.  Then, as we will see in this paper,
\[
\langle Q_{i,E},Q_{j,E}\rangle=h(E)([E]\cdot D)\vc v_i\cdot \vc v_j+O(1).
\]
The error term represented by $O(1)$ is independent of $E$.  (We trust that there is no confusion between the notation for the section $O$ and the big Oh notation.)  In particular
\[
\lim_{h(E)\to \infty} \frac{1}{h(E)([E]\cdot D)}\langle Q_{i,E},Q_{j,E}\rangle=\vc v_i\cdot \vc v_j,
\]
which gives the relative geometry of the lattice in \fref{fig2c}.

This phenomenon is true in general.

\begin{theorem} \label{t1} Let $X/k$ be a K3 surface defined over a number field $k$ with an elliptic fibration $[E]$ and a section $O$.  Let
\[
\CC=\{\ss_*: \ss\in \Aut(X/k)\}
\]
be the pull back of the group of automorphisms on $X$.  Suppose $\CC_{[E]}$, the stabilizer of $[E]$, has an Abelian subgroup $G=t_1\Bbb Z \oplus ... \oplus t_{\rho-2}\Bbb Z\cong \Bbb Z^{\rho-2}$ of maximal rank.  Let $D$ be an ample divisor on $X$ and $h_D$ a Weil height associated to $D$.  Let $D_i$ represent both the divisor class $t_i([O])$ and the unique $-2$ curve in $D_i$.  For any elliptic curve $E$ in the fibration $[E]$, let
\begin{align*}
O_E&=O\cap E \\
Q_{i,E}&= D_i\cap E \\
\vc v_i&=D_i-[O]\\
h(E)&=h_D(O_E).
\end{align*}
Define $\tau_i: X\to X$ by
\[
\tau_i(P)=P+Q_{i,E},
\]
where $E$ is the elliptic curve in $[E]$ that contains $P$.
Finally, let $\langle , \rangle$ be the Neron-Tate pairing on $E$ for any $E\in [E]$.  Then
\[
\tau_{i*}=t_i
\]
and
\begin{equation}\label{eq2}
\lim_{h(E)\to \infty} \frac{1}{h(E)([E]\cdot D)}\langle Q_{i,E},Q_{j,E}\rangle= \vc v_i\cdot \vc v_j.
\end{equation}
Furthermore, the map $t_i$ is translation by $\vc v_i$ when viewed in a Poincar\'e upper half hyperspace model with $[E]$ the point at infinity.  
\end{theorem}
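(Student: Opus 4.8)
The plan is to handle the three assertions in turn, using the description of the Mordell--Weil action on $NS(X)$ together with the variation of the canonical height in a family. \emph{Part 1, the identification $\tau_{i*}=t_i$.} First I would record that $\tau_i$ is a genuine automorphism defined over $k$: writing $t_i=s_*$ with $s\in\Aut(X/k)$, the class $D_i=s_*([O])$ is $k$-rational, its unique $-2$ curve is defined over $k$, and fibrewise translation by the corresponding section extends to an automorphism of the minimal surface $X$, so $\tau_{i*}\in\CC$. Next I would compute $\tau_{i*}$ on $NS(X)$: since $\tau_i$ carries fibres to fibres it fixes $[E]$, and since $\tau_i(O_E)=O_E+Q_{i,E}=Q_{i,E}$ it carries the curve $O$ to the curve $D_i$, whence $\tau_{i*}([O])=D_i$. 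More precisely, translation by $Q_{i,E}$ acts on $NS(X)$ as the Eichler--Siegel transvection $x\mapsto x+(x\cdot[E])\,\phi-(x\cdot\phi)[E]-\tfrac12(\phi\cdot\phi)(x\cdot[E])[E]$ attached to the isotropic class $[E]$ and to the Shioda vector $\phi=\phi(Q_{i,E})=D_i-[O]-c_i[E]$ with $c_i=(D_i\cdot[O])+2$; a direct check, using $D_i^2=[O]^2=-2$ and $D_i\cdot[E]=[O]\cdot[E]=1$ (so $\phi^2=-2c_i$), confirms that this transvection fixes $[E]$ and sends $[O]$ to $D_i$.

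To conclude $\tau_{i*}=t_i$ I would compare the two elements inside $\CC_{[E]}$. Both fix $[E]$ and both send $[O]\mapsto D_i$, so $g=t_i^{-1}\tau_{i*}$ fixes the unimodular hyperbolic plane $U=\langle[E],[O]\rangle$ pointwise and therefore restricts to an isometry of the negative-definite lattice $W=U^{\perp}$, i.e. lands in the finite group $\Aut(W)$. The remaining point, that this finite-order, $U$-fixing isometry is trivial, I would extract from the parabolic nature of $\CC_{[E]}$: an integral isometry in $\CC$ fixing the isotropic class $[E]$ acts on the boundary as a Euclidean isometry whose rotational part generates a finite (hence, for the discrete group, rigid) piece, and because $G\cong\Bbb Z^{\rho-2}$ is torsion free of maximal rank its generators must be parabolic (unipotent); a unipotent isometry that is simultaneously of finite order is the identity. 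This ruling out of a discrepancy by a torsion/rotation (for instance an inversion) is the one genuinely delicate step in Part~1, and I would phrase it through the rigidity of the translation lattice.

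\emph{Part 2, the limit \eqref{eq2}, which I expect to be the heart of the matter.} I would read it as an instance of the variation of the canonical height in the family $E\in[E]$ over the base $\Bbb P^1$, whose generic fibre is an elliptic curve over $k(\Bbb P^1)$ carrying the sections $O,Q_i,Q_j$. By the theorem of the cube, the bilinear form attached to the canonical height $\hat h$ built from $h_D|_E$ equals $([E]\cdot D)$ times a pairing depending only on the fibre degree and not on the chosen divisor, which supplies the factor $[E]\cdot D$. Then I would invoke Tate's theorem in Silverman's uniform form, namely $\hat h_{E}(Q_{i,E})=\hat h_{\mathrm{geom}}(Q_i)\,h(E)+O(1)$ with the $O(1)$ independent of $E$, polarize it, and divide by $h(E)([E]\cdot D)$; the limit is the geometric (Shioda) pairing $\langle Q_i,Q_j\rangle_{\mathrm{geom}}=-\phi(Q_i)\cdot\phi(Q_j)$. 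Finally the lattice identity $\phi(Q_i)\cdot\phi(Q_j)=\vc v_i\cdot\vc v_j$, which follows from $\vc v_i=\phi(Q_i)+c_i[E]$ together with $\phi(Q_i)\cdot[E]=[E]^2=0$, identifies the limit with $\vc v_i\cdot\vc v_j$ computed in the positive-definite metric the hyperbolic model induces on the boundary (equivalently, the negative of the intersection form on $W$, so that the limit is indeed nonnegative as a height pairing must be). The main obstacle I anticipate is keeping three normalizations straight at once: matching $h(E)=h_D(O_E)$ to a height on the base, producing the factor $[E]\cdot D$, and obtaining a true $O(1)$ (hence an honest limit) rather than merely $o(h(E))$; for the uniformity I would lean squarely on the specialization and variation-of-height machinery.

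\emph{Part 3, that $t_i$ is translation by $\vc v_i$.} With Parts~1 and~2 in hand this is a short computation in the upper half hyperspace model with $[E]$ at infinity. Representing a boundary point by an isotropic vector $y$ with $y\cdot[E]=1$, the transvection $\tau_{i*}=t_i$ sends $y\mapsto y+\phi(Q_i)-\big((y\cdot\phi(Q_i))+\tfrac12\phi(Q_i)^2\big)[E]$, so in the affine boundary coordinate (that is, modulo $[E]$) it is translation by $\phi(Q_i)\equiv\vc v_i$. Hence $t_i$ acts as Euclidean translation by $\vc v_i$ on the boundary, completing the theorem.
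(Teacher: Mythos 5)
Your proposal is correct in outline but follows a genuinely different route from the paper's. For the identification $\tau_{i*}=t_i$ the paper does not import the Eichler--Siegel/Shioda description of the Mordell--Weil action on $\Pic(X)$; it derives it from scratch by factoring $\tau_i=\ss_i\circ\ss_0$ into the two fibrewise involutions $P\mapsto -P$ and $P\mapsto Q_{i,E}-P$ and determining their pullbacks by an eigenspace argument (Lemmas \ref{l2.1} and \ref{l2.2}), where the maximal-rank hypothesis is used to show that a putative $+1$-eigenvector in $V^{\perp [E],[O]}$ would force infinitely many solutions of $2P=O$ (resp.\ $2P=Q_{i,E}$) on a generic fibre. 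For Eq.~(\ref{eq2}) the paper likewise avoids quoting Tate and Silverman and instead runs a self-contained vector-height computation, iterating $\vc h(\tau_{\vc v}P)=T_{\vc v}\vc h(P)+\vc O(1)$ and controlling the accumulated error by induction to get $\hat h(Q_{\vc v,E})=\tfrac12 h(E)(\vc v\cdot\vc v)([E]\cdot D)+O(1)$ uniformly in $E$, then polarizing. Your route buys brevity and situates the statement in its classical context (an overlap the paper itself acknowledges); the paper's route buys self-containedness together with the lemmas it flags as possibly novel. Your Part 3 coincides with the paper's (Lemma \ref{l1.3}), and you are right that the limit must be read in the Euclidean (sign-reversed) metric on the boundary for it to be nonnegative.

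The one soft spot is in your Part 1. The rigidity argument you offer as the ``delicate step'' does not close on its own: $g=t_i^{-1}\tau_{i*}$ fixes $[E]$ and $[O]$ and hence lies in the finite group $\Aut(W)$ with $W=\langle[E],[O]\rangle^{\perp}$, but a finite-order element of $\Aut(W)$ need not be trivial, and your deduction only works once you already know $\tau_{i*}$ acts as a translation (is unipotent) on the boundary. That unipotency is precisely what the Eichler--Siegel citation supplies, so the citation, not the rigidity argument, is carrying the load --- and it is exactly the content the paper proves via Lemmas \ref{l2.1} and \ref{l2.2}. Moreover, the clean transvection formula you quote is valid only when translation by a section does not permute components of reducible fibres; under the theorem's hypotheses all fibres are indeed irreducible, but that is itself a consequence of the maximal-rank assumption (a fibre component disjoint from $O$ would produce a $+1$-eigenvector of $\ss_0^*$ in $V^{\perp[E],[O]}$ and hence the same contradiction as in Lemma \ref{l2.1}), and you would need to record that fact before invoking the formula.
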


Some of the preceding, and in particular Eq.~(\ref{eq2}), appears in works by Silverman and Tate \cites{Tat83, Sil83} (see also \cite{Sil94}*{Section III.11}).  Our approach and the geometric interpretation, via pictures of the ample cone, appear to be novel.  Lemmas \ref{l2.1} and \ref{l2.2} are also notable and possibly novel.

\begin{remark}
The notation of this example was chosen so as to be consistent with a suitable notation for this paper, and differs significantly from the notation used in \cite{Bar11}.  For those who might be interested, $[E]$ is denoted by $D_1$ in \cite{Bar11}, $[O]$ by $D_4$, $t_1=ST_2ST_2$, and $t_2=T_2T_4$.
\end{remark}

\section{Background}
\subsection{K3 surfaces}
Let $X$ be a K3 surface defined over a number field $k$.  Let $\Pic(X)$ be its Picard group and let $\{D_1,...,D_{\rho}\}$ be a basis over $\Bbb Z$, where $\rho$ is the Picard number.  Let $J=[D_i\cdot D_j]$ be the intersection matrix.  By the Hodge Index Theorem, $J$ has signature $(1,\rho-1)$, so is a Lorentz product on $\Pic(X)\otimes \Bbb R$, and hence there is an underlying hyperbolic structure.  Let $D$ be an ample divisor and define the light cone to be
\[
\L=\{\vc x\in \Pic(X)\otimes \Bbb R: \vc x\cdot \vc x>0, \vc x\cdot D>0\}.
\]
Let
\[
\H=\{\vc x\in \L: \vc x \cdot \vc x=1\}.
\]
For two points $A$ and $B$ in $\L$, let us define a distance $|AB|$ by $||A||||B||\cosh |AB|=A\cdot B$, where $||\vc x||=\sqrt{\vc x\cdot \vc x}$.  Then the set $\H$ equipped with this distance is a model of hyperbolic geometry $\Bbb H^{\rho-1}$.  At times, it will be convenient to identify $\H$ with $\L/\Bbb R^+$, equipped with the same metric.  The boundary $\partial \H = \partial \L/\Bbb R^+$ is the usual compactification of $\Bbb H^{\rho-1}$ and is congruent to $\Bbb S^{\rho-2}$.

Let $\K$ be the ample cone for $X$.  A cross section of $\K$ is a polyhedron with possibly an infinite number of faces.  Each face is a plane through the origin, so forms a hyperplane in $\H$.  This is what the rendering in \fref{fig1a} represents:  Every circle on the sphere represents a plane in the Poincar\'e ball model of $\Bbb H^2$, and $\K/\Bbb R^+$ is the region bounded by all these hyperbolic planes.

Let
\[
\O(R)=\{T\in M_{2\times 2}(R): T^tJT=J\},
\]
and
\[
\O^+(R)=\{T\in \O(R): T\L=\L\}.
\]
Then $\O^+(\Bbb R)$ is the group of isometries on $\H$.  Let $\O''\leq \O^+(\Bbb Z)$ be the group of symmetries of $\K$ in $\O^+(\Bbb Z)$.  If $\ss\in \Aut(X/k)$, then its pullback $\ss^*$ clearly preserves $\K$, has integer entries, and preserves the intersection pairing.  We therefore have a natural homomorphism
\begin{align*}
\Phi: \Aut(X/k) &\to \O'' \\
\ss&\mapsto \ss^*.
\end{align*}
For $k$ sufficiently large, the map $\Phi$ has a finite kernel and co-kernel \cite{PS-S}.

\subsection{The Euclidean structure of $\partial \L/\Bbb R^+$}\label{s1.2}
Let $\Bbb R^{1,\rho-1}$ be a Lorentz space equipped with the Lorentz product $\cdot$ (which may be thought of as $\Pic(X)\otimes \Bbb R$ equipped with the intersection pairing).  The superscript $1,\rho-1$ is the signature of the Lorentz product; that is, it has one positive eigenvalue and $\rho-1$ negative eigenvalues.  Let us distinguish a $D$ with $D\cdot D>0$ and define the light cone $\L$ as above.  Let us fix $E\in \partial \L$ and define
\[
\partial H_E:=(\partial \L\setminus E\Bbb R^+)/\Bbb R^+.
\]
For any $A\in \partial \L\setminus E\Bbb R^+$, let $\bar A$ be its equivalence class in $\partial \H_E$.  For any $\bar A$ and $\bar B\in \partial \H_E$, let us define
\[
|\bar A\bar B|_{E}:=\sqrt{\frac{2A\cdot B}{(A\cdot E)(B\cdot E)}}.
\]

\begin{lemma}  The function $|\bar A\bar B|_{E}$ defines a Euclidean metric on $\partial \H_E$.  Furthermore, if $\cc$ preserves the Lorentz product and $\cc E=E$, then $\cc$ is a Euclidean isometry on $\partial \H_E$.
\end{lemma}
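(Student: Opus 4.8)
The plan is to show that $|\bar A\bar B|_E$ agrees with the standard Euclidean distance under an explicit identification of $\partial H_E$ with $\Bbb R^{\rho-2}$. First I would check that the formula is well defined on equivalence classes: replacing $A$ by $\ll A$ and $B$ by $\mu B$ with $\ll,\mu>0$ multiplies both numerator and denominator by $\ll\mu$, so $|\bar A\bar B|_E$ is unchanged. Next I would note that for any $A\in\partial\L\setminus E\Bbb R^+$ one has $A\cdot E>0$: two null vectors in the same component of $\partial\L$ satisfy $A\cdot E\geq 0$ by Cauchy--Schwarz, with equality only when they are parallel, which is excluded. Hence each class $\bar A$ has a unique representative normalized by $A\cdot E=1$, and the square root is always real.

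To obtain Euclidean coordinates, I would choose a null vector $E'$ with $E\cdot E'=1$; then $E$ and $E'$ span a hyperbolic plane whose orthogonal complement $W$ is negative definite of dimension $\rho-2$. Writing $x=aE+bE'+w$ with $w\in W$, one computes $x\cdot E=b$ and $x\cdot x=2ab+w\cdot w$, so the normalized representatives on $\partial\L$ take the form $A=a_AE+E'+w_A$ with $a_A=-\tfrac12\,w_A\cdot w_A$, and similarly for $B$. The map $\bar A\mapsto w_A$ then identifies $\partial H_E$ with $W$.

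The key computation is then $2A\cdot B=2(a_A+a_B+w_A\cdot w_B)=-(w_A-w_B)\cdot(w_A-w_B)$, which equals $|w_A-w_B|^2$ for the positive definite norm $|w|^2=-w\cdot w$ on $W$. Since $(A\cdot E)(B\cdot E)=1$ for the normalized representatives, we get $|\bar A\bar B|_E=|w_A-w_B|$, exactly the Euclidean distance on $W\cong\Bbb R^{\rho-2}$; symmetry, the triangle inequality, and vanishing only on the diagonal follow at once. For the second assertion, if $\cc$ preserves the Lorentz product and $\cc E=E$ then $\cc$ preserves the forward cone, since it fixes $E$ in it, and so descends to $\partial H_E$; because $(\cc A)\cdot(\cc B)=A\cdot B$ and $(\cc A)\cdot E=(\cc A)\cdot(\cc E)=A\cdot E$, the defining formula gives $|\overline{\cc A}\,\overline{\cc B}|_E=|\bar A\bar B|_E$ directly. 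I expect the only real subtlety to be the bookkeeping in the coordinate computation, in particular recognizing that the cross terms assemble into the perfect square $(w_A-w_B)\cdot(w_A-w_B)$, together with the positivity of $A\cdot E$ needed to make both the normalization and the square root legitimate.
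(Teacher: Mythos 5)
Your proposal is correct and follows essentially the same route as the paper: normalize representatives by $A\cdot E=1$, decompose against a second null vector ($E'$ in your notation, $P$ in the paper's) plus the negative definite complement, and reduce $2A\cdot B$ to the perfect square $-(w_A-w_B)\cdot(w_A-w_B)$. The only cosmetic difference is that you justify $A\cdot E>0$ via the reversed Cauchy--Schwarz inequality for null vectors, where the paper argues that $V^{\perp E}$ is tangent to the light cone; both are standard and equivalent.
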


\begin{proof}  We first note that $|\bar A\bar B|_{E}$ is invariant under scalar multiples of $A$ or $B$, so it is well defined on $\partial \H_E$.

Let us define the space perpendicular to $E$ to be
\[
V^{\perp E}:= \{\vc x\in \Bbb R^{1,\rho-1}: \vc x\cdot E=0\}.
\]
Note that $E\in V^{\perp E}$.

If $\vc x\in V^{\perp E}$ and $\vc x \cdot \vc x=0$, then the space spanned by $\vc x$ and $E$ is in $\partial \L$.  Since $\partial \L$ is a cone, it contains no two-dimensional subspaces, so $\vc x$ must be a scalar multiple of $E$.  Thus, $V^{\perp E}$ is tangent to $\partial \L$.  Now suppose $P\in \partial \L$ but is not a multiple of $E$. Then $P$ and $D$ are on the same side of $V^{\perp E}$, so $P\cdot E$ and $D\cdot E$ share the same sign.  That is, $P\cdot E>0$.  Without loss of generality, we may scale $P$ with a positive scalar so that $P\cdot E=1$.

Since $P\notin V^{\perp E}$, the set $\{P,V^{\perp E}\}$ spans $\Bbb R^{1,\rho-1}$.  For an arbitrary $A\in \partial \L$, let us write
\[
A=a_PP+a_EE+\vc a,
\]
where
\[
\vc a\in V^{\perp E,P}:=\{\vc x\in \Bbb R^{1,\rho-1}: \vc x\cdot E=\vc x\cdot P=0\}.
\]
As with $P$, we may scale $A$ so that $A\cdot E=1$.  Note that $A\cdot E=a_P$, so we now have
\begin{equation}\label{eq1.1}
A=P+a_EE+\vc a.
\end{equation}
Since $A\cdot A=0$, we get
\[
0=A\cdot A=(P+a_EE+\vc a)\cdot (P+a_EE+\vc a)=2a_E+\vc a\cdot \vc a,
\]
so
\[
a_E=\frac{-\vc a\cdot \vc a}2.
\]

Let us now calculate $|\bar A\bar B|_{E}$:
\begin{align*}
|\bar A\bar B|_{E}&=\sqrt{\frac{2(P-\frac{\vc a\cdot \vc a}2E+\vc a)\cdot (P-\frac{\vc b\cdot \vc b}2 E+\vc b)}{(A\cdot E) (B\cdot E)}} \\
&=\sqrt{-\vc a\cdot \vc a - \vc b\cdot \vc b +2 \vc a \cdot \vc b} \\
&=\sqrt{-(\vc a-\vc b)\cdot (\vc a-\vc b)}.
\end{align*}
We note that $V^{\perp E,P}$ is the intersection of two tangent spaces to the light cone, so $\vc a$ and $\vc b$ are in a space where the Lorentz product is negative definite.  Thus $|\bar A\bar B|_{E}$ is a Euclidean metric.

Finally, if $\cc$ preserves the Lorentz product and $\cc E=E$, then $\cc$ clearly preserves the metric $|\bar A\bar B|_{E}$, so is a Euclidean isometry on $\partial \H_E$.
\end{proof}

The Euclidean structure outlined above is the one we are used to; that is to say, it is the $(\rho-2)$-dimensional Euclidean structure of the boundary of the Poincar\'e upper-half space model of $\Bbb H^{\rho-1}$ with $E$ the point at infinity.

The Euclidean space $\partial \H_E$ can also be represented by $V^{\perp E,P}$ in a natural way, via the identification
\begin{align*}
\phi: \qquad  \partial \H_E &\to V^{\perp E,P} \\
\bar A &\mapsto \frac{\vc a}{A\cdot E},
\end{align*}
where $A$ is any representative of $\bar A$ and $A=a_PP+a_EE+\vc a$, with $\vc a\in V^{\perp E,P}$.  Note that 
\[
\vc a= A-\frac{A\cdot E}{E\cdot P}P-\frac{A\cdot P}{E\cdot P}E.
\]

We can use this subspace to build a Poincar\'e upper half-space model of $\Bbb H^{\rho-1}$.   We let $(\vc x,z)\in V^{\perp E,P}\times \Bbb R^+$ and equip this set with the arclength element $ds$ where
\[
ds^2=\frac{-d\vc x\cdot d\vc x+dz^2}{z^2}.
\]
The negative sign arises because the Lorentz product is negative definite on $V^{\perp E,P}$.

\begin{lemma}  Let $\vc U=wP+vE +\vc u\in \H$ where $\vc u\in V^{\perp E,P}$.  The map
\begin{align*}
\Phi: \qquad \H &\to V^{\perp E,P} \times \Bbb R^+ \\
  \vc U &\mapsto \left(\frac{\vc u}{w},\frac{1}{w}\right)
\end{align*}
is an isomorphism of hyperbolic spaces.
\end{lemma}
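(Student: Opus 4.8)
The plan is to exhibit an explicit inverse to $\Phi$ and then verify that $\Phi$ pulls the upper half-space arclength element back to the intrinsic metric on $\H$. First I would record the inner products that the construction of Section~\ref{s1.2} guarantees: since $P,E\in\partial\L$ we have $P\cdot P=E\cdot E=0$, the normalization gives $P\cdot E=1$, and $\vc u\cdot P=\vc u\cdot E=0$ for every $\vc u\in V^{\perp E,P}$. Writing $\vc U=wP+vE+\vc u$ and expanding, these relations collapse $\vc U\cdot\vc U$ to $2wv+\vc u\cdot\vc u$, so the condition $\vc U\in\H$, i.e.\ $\vc U\cdot\vc U=1$, forces $v=(1-\vc u\cdot\vc u)/(2w)$. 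Thus $\vc U$ is determined by the pair $(w,\vc u)$, and the assignment $(\vc x,z)=(\vc u/w,1/w)$ is invertible with
\[
\Phi^{-1}(\vc x,z)=\frac1z\,P+\Bigl(\frac z2-\frac{\vc x\cdot\vc x}{2z}\Bigr)E+\frac{\vc x}{z}.
\]

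For well-definedness and to see the target is the right sheet, I would note $w=\vc U\cdot E$, which is positive on all of $\L$ by the same-side-of-$V^{\perp E}$ argument used in the first Lemma of this subsection; hence $z=1/w>0$ and $\Phi$ maps $\H$ into $V^{\perp E,P}\times\Bbb R^+$. Conversely a direct substitution confirms that $\Phi^{-1}(\vc x,z)$ has norm one and satisfies $\vc U\cdot E=1/z>0$, so it lands on the sheet $\H$. This establishes the bijection; it remains to match the two metrics.

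The heart of the argument is a metric computation. The hyperboloid $\H$ carries the Riemannian metric $ds^2=-d\vc U\cdot d\vc U$: tangent vectors to $\{\vc U\cdot\vc U=1\}$ are Lorentz-orthogonal to $\vc U$ and so lie in a negative-definite subspace, and one checks against the distance $\cosh|AB|=A\cdot B$ that this is the correct normalization, since a unit-speed geodesic $\gamma(t)=\cosh t\,A+\sinh t\,\vc n$ with $\vc n\cdot\vc n=-1$, $A\cdot\vc n=0$ satisfies $A\cdot\gamma(t)=\cosh t$. I would then differentiate $\Phi^{-1}$, write $d\vc U=\alpha P+\beta E+\vc\xi$ with $\vc\xi\in V^{\perp E,P}$, and use the inner-product relations to reduce $d\vc U\cdot d\vc U$ to $2\alpha\beta+\vc\xi\cdot\vc\xi$. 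Carrying out the expansion, the $dz^2$ contributions proportional to $\vc x\cdot\vc x$ cancel, the mixed $(\vc x\cdot d\vc x)\,dz$ terms cancel, and what survives is $z^{-2}(d\vc x\cdot d\vc x)-z^{-2}\,dz^2$; negating gives exactly $(-d\vc x\cdot d\vc x+dz^2)/z^2$, the prescribed upper half-space element.

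The main obstacle is purely bookkeeping in this last step: one must keep the sign conventions straight---the Lorentz form is negative definite on $V^{\perp E,P}$, which is precisely why the model metric carries $-d\vc x\cdot d\vc x$---and verify that the several $dz^2$ and $(\vc x\cdot d\vc x)\,dz$ contributions cancel rather than accumulate. Once the pulled-back arclength agrees, $\Phi$ is a smooth bijection with smooth inverse preserving $ds^2$, hence an isometry, i.e.\ an isomorphism of hyperbolic spaces, completing the proof.
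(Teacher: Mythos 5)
Your proof is correct and follows essentially the same route as the paper: the core of both arguments is the computation showing that the Lorentz arclength element $-d\vc U\cdot d\vc U$ corresponds to $(-d\vc x\cdot d\vc x+dz^2)/z^2$ under the coordinate change, using $P\cdot P=E\cdot E=0$, $P\cdot E=1$, and the constraint $2vw+\vc u\cdot \vc u=1$. You merely run the computation through the explicit inverse $\Phi^{-1}$ rather than through $(w,v,\vc u)$ directly, and you supply the bijectivity and positivity-of-$w$ details that the paper leaves implicit.
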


\begin{proof}  We prove this by demonstrating that the arclength element $ds'$ on $\H$ induced by the Lorentz product maps to the Poincar\'e arclength element $ds$.  Let $\Phi(\vc U)=(\vc x,z)$.  Then
\begin{alignat*}{2}
\vc x&= \frac{\vc u}{w} \qquad & z&=\frac{1}w \\
d\vc x&=\frac{d\vc u}w -\frac{\vc udw}{w^2} \qquad & dz&=-\frac{dw}{w^2}.
\end{alignat*}
Thus,
\[
ds^2=-d\vc u\cdot d\vc u+\frac{2(\vc u\cdot d\vc u) dw}{w}-\frac{(\vc u\cdot\vc u) dw^2}{w^2}+\frac{dw^2}{w^2}.
\]
Using $2vw+\vc u\cdot \vc u=1$, we get
\[
ds^2=-d\vc u\cdot d\vc u+\frac{2\vc u\cdot d\vc u dw}{w}+\frac{2vwdw^2}{w^2}.
\]
The arclength element induced by the Lorentz product satisfies
\[
(ds')^2=-d\vc U\cdot d\vc U=-2dvdw-d\vc u\cdot d\vc u,
\]
where the minus sign comes from the signature $(1,\rho-1)$ of our Lorentz product.
We use $2vw+\vc u\cdot \vc u=1$ to solve for $dv$:
\begin{align*}
2vdw +2wdv+2\vc u\cdot d\vc u&=0 \\
-dv&=\frac{vdw+\vc u\cdot d\vc u}{w},
\end{align*}
and plugging this into our formula for $(ds')^2$:
\[
(ds')^2=\frac{2vdw^2}{w}+\frac{2(\vc u\cdot d\vc u) dw}{w}-d\vc u\cdot d\vc u=ds^2,
\]
as desired.
\end{proof}

Suppose $\CC_E$ has a subgroup $G$ isomorphic to $\Bbb Z^{\rho-2}$.  Since $\CC$ is discrete, and $\CC_E$ acts as a group of Euclidean isometries on $\partial \H_E$, we know the elements of $G$ when restricted to $V^{\perp E,P}$ are translations.  The following result describes such maps.

\begin{lemma}\label{l1.3}  Let $\vc v\in V^{\perp E,P}$.  The map
\[
T_{\vc v}(\vc x)=\vc x -\left(\vc x\cdot \vc v+\frac{1}2 (\vc x\cdot E)(\vc v\cdot \vc v)\right)E+(\vc x\cdot E)\vc v
\]
is in $\O^+$, fixes $E$, and acts as translation by $\vc v$ in $V^{\perp E,P}\cong \partial \H_E$.
\end{lemma}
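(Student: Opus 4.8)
The plan is to verify the three assertions by direct computation, treating $T_{\vc v}$ as the linear map it manifestly is and exploiting the defining relations $E\cdot E=0$, $P\cdot P=0$, $E\cdot P=1$, together with $\vc v\cdot E=\vc v\cdot P=0$ (which hold since $\vc v\in V^{\perp E,P}$).

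First I would dispatch the claim that $E$ is fixed: substituting $\vc x=E$ into the formula, the coefficient $\vc x\cdot\vc v$ and the factor $\vc x\cdot E$ both vanish, leaving $T_{\vc v}(E)=E$. Next, to see that $T_{\vc v}$ preserves the Lorentz product, I would write $T_{\vc v}(\vc x)=\vc x+\alpha(\vc x)E+(\vc x\cdot E)\vc v$ with $\alpha(\vc x)=-\bigl(\vc x\cdot\vc v+\tfrac12(\vc x\cdot E)(\vc v\cdot\vc v)\bigr)$, and expand $T_{\vc v}(\vc x)\cdot T_{\vc v}(\vc x)$. Every term other than $\vc x\cdot\vc x$ carries a factor of $E\cdot E$, $E\cdot\vc v$, $\vc x\cdot E$, or $\vc x\cdot\vc v$; using $E\cdot E=E\cdot\vc v=0$ the surviving contributions are $2\alpha(\vc x)(\vc x\cdot E)+2(\vc x\cdot E)(\vc x\cdot\vc v)+(\vc x\cdot E)^2(\vc v\cdot\vc v)$, and substituting $\alpha$ shows these cancel identically. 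Since $T_{\vc v}$ is linear and preserves the quadratic form, polarization gives $T_{\vc v}\in\O(\Bbb R)$. To upgrade this to $\O^+$, I would observe that $T_{\vc v}$ maps the open cone $\{\vc x\cdot\vc x>0\}$ into itself, and this cone has the two components $\L$ and $-\L$; applying the continuous path $T_{t\vc v}$, $t\in[0,1]$, to the distinguished vector $D\in\L$ traces a curve in $\{\vc x\cdot\vc x>0\}$ from $D$ that never meets the separating boundary, hence stays in $\L$, so $T_{\vc v}(D)\in\L$ and $T_{\vc v}\L=\L$.

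Finally, for the translation property I would unwind the identification $\phi(\bar A)=\vc a/(A\cdot E)$ of $\partial\H_E$ with $V^{\perp E,P}$. Scaling a representative so that $A\cdot E=1$ gives $A=P+a_EE+\vc a$ and $\phi(\bar A)=\vc a$. Because $\vc v\cdot P=\vc v\cdot E=0$, one has $A\cdot\vc v=\vc a\cdot\vc v$, so the $P$-component of $T_{\vc v}(A)$ remains $1$ while its $V^{\perp E,P}$-component becomes $\vc a+\vc v$; moreover $T_{\vc v}(A)\cdot E=T_{\vc v}(A)\cdot T_{\vc v}(E)=A\cdot E=1$ since $E$ is fixed and the product preserved. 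Hence $\phi(\overline{T_{\vc v}(A)})=\vc a+\vc v=\phi(\bar A)+\vc v$, which is exactly translation by $\vc v$.

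I do not anticipate a genuine obstacle here: all three parts reduce to bookkeeping with the Lorentz product once the relations $E\cdot E=0$, $E\cdot\vc v=0$, and $\vc v\cdot P=0$ are in hand. The one point that deserves a sentence rather than a formula is the passage from $\O$ to $\O^+$, where I rely on connectedness rather than a computation; everything else is a routine, if slightly lengthy, expansion.
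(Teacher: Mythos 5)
Your proposal is correct and follows essentially the same route as the paper, which simply asserts that the identities $T_{\vc v}(\vc x)\cdot T_{\vc v}(\vc y)=\vc x\cdot\vc y$, $T_{\vc v}(E)=E$, and $\phi(T_{\vc v}(A))=\phi(A)+\vc v$ follow by direct calculation; you have supplied those calculations explicitly and correctly. Your connectedness argument for the passage from $\O$ to $\O^+$ is a welcome addition, since the paper's proof leaves that point entirely implicit.
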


\begin{proof}  It is a straightforward calculation to verify that $T_{\vc v}(\vc x)\cdot T_{\vc v}(\vc y)=\vc x\cdot \vc y$, and that $T_{\vc v}(E)=E$.  If $A\in \partial \H_E$, then $A\cdot A=0$, and after some calculation, one finds $\phi(T_{\vc v}(A))=\phi(A)+\vc v$.  Hence the action of $T_{\vc v}$ on $V^{\perp E,P}$ is translation by $\vc v$.
\end{proof} 

Note that $([O]+[E])\cdot ([O]+[E])=0$ and $([O]+[E])\cdot [E]=1$, so we may choose $P=[O]+[E]$.  With this choice, $t_i=T_{\vc v}$ for some $\vc v \in V^{\perp [E],[O]+[E]}$.  Note that
\[
D_i=t_i([O])=T_{\vc v}([O])=[O]+cE+\vc v
\]
for some $c$.  We can isolate $c$ by noting $[O]\cdot \vc v= ([O]+[E])\cdot \vc v=0$ so $D_i\cdot [O]=-2+c$.  Thus $\vc v=D_i-[O]-(2+D_i\cdot [O])E$.  Thus $t_i=T_{\vc v}$.  Note that $\vc v_i$ and $\vc v$ differ by a multiple of $E$.  While Lemma \ref{l1.3} uses $\vc v\in V^{\perp E,P}$, it is straightforward to verify that $T_{\vc v}=T_{\vc v+aE}$ for any $a$.  Thus, $t_i=T_{\vc v_i}$, as desired.   

\begin{remark}  The map $T_{\vc v}$ was derived by first considering an arbitrary $A=a_PP+a_EE+\vc a\in \partial\L$ with $\vc a\in V^{\perp E,P}$.  We note that $T_{\vc v}(A)\cdot E=A\cdot T_{\vc v}^{-1}E=A\cdot E=a_P$, so
\[
T_{\vc v}(a_P^{-1}A)=P+a_E'E+\vc a+\vc v.
\]
We use $T_{\vc v}(A)\cdot T_{\vc v}(A)=0$ to solve for $a_E'$.  Finally, we note that $\vc a\cdot \vc v=A\cdot \vc v$ and gather together the components of $A$ to get the formula for $T_{\vc v}$.
\end{remark}

\begin{remark}  It is straight forward to verify $T^m_{\vc v}(\vc x)=T_{m\vc v}(\vc x)$, and that $T_{\vc v}\circ T_{\vc w}=T_{\vc w}\circ T_{\vc v}$.
\end{remark}

\subsection{The Neron-Tate Pairing}

Let $E$ be an elliptic curve defined over a number field $k$.  Then $E(k)\cong E_{{\rm Tor}}\times \Bbb Z^r$ and there exists a basis $\{P_1,...,P_r\}$ to the torsion-free part of $E(k)$.  For a point $P\in E(k)$, let $H(P)$ be its naive height.  The logarithmic height of $P$ is $h(P)=\log(H(P))$, and the canonical height is
\[
\hh(P)=\lim_{n\to \infty}\frac{h([n]P)}{n^2}.
\]
The canonical height has several nice properties:
\begin{align}
h(P)&=\hh(P)+O(1) \label{eqNT1} \\
\hh([n]P)&=n^2\hh(P) \label{eqNT2} \\
\hh(P+Q)+\hh(P-Q)&=2\hh(P)+2\hh(Q). \label{eqNT3}
\end{align}
From Eq.~(\ref{eqNT1}) and (\ref{eqNT2}), it follows that $\hh(P)=0$ if and only if $P\in E_{{\rm Tor}}$.  We define the Neron-Tate pairing to be
\[
\langle P,Q\rangle = \hh(P+Q)-\hh(P)-\hh(Q).
\]
It is a nice exercise (using Eq.~(\ref{eqNT3})) to show that the Neron-Tate pairing is a bilinear form.  That is, $\langle P,Q\rangle=\langle Q,P\rangle$ and
\[
\langle [m]P+Q,R\rangle=m\langle P, R\rangle+\langle Q, R\rangle.
\]

\subsection{Vector Heights}  Given a basis $\D=\{D_1,...,D_\rho\}$ of $\Pic(X)$, let us define a dual basis $\D^*=\{D_1^*,...,D_\rho^*\}$ such that
\[
D_i\cdot D_j^*=\dd_{ij},
\]
where $\dd_{ij}$ is the Kronecker-delta symbol ($\dd_{ij}=1$ if $i=j$, $\dd_{ij}=0$ if $i\neq j$).  For each $D_i$, let us pick a Weil height $h_{D_i}$ with respect to the divisor $D_i$, and define the vector height
\begin{align*}
\vc h(P): \qquad X &\to \Pic(X)\otimes \Bbb R \\
P &\mapsto \sum_{i=1}^\rho h_{D_i}(P)D_i^*.
\end{align*}
The vector height has a couple of nice properties \cite{Bar03}.  For any Weil height $h_D$ associated to the divisor $D$, we have
\[
h_D(P)=\vc h(P)\cdot D +O(1),
\]
where the constant implied by the $O(1)$ is independent of $P$, but may depend on $D$.  Also, for any $\ss\in \Aut(X)$,
\[
\vc h(\ss P)=\ss_*\vc h(P)+O(1),
\]
where again the constant implied by the $O(1)$ is independent of $P$ but may depend on $\ss$.

\section{The main result}

\subsection{The automorphisms of $X$ that fix the fibers in $[E]$}  Let
\begin{align*}
\ss_0: \qquad X& \to X \\
P &\mapsto -P
\end{align*}
where the operation is on the unique elliptic curve $E\in [E]$ that contains $P$.  Then $\ss_0\in\Aut(X)$ and its pullback $\ss_0^*$ acts linearly on $\Pic(X)$.  The following describes the action of $\ss_0^*$:

\begin{lemma}  \label{l2.1}  The pullback $\ss_0^*$ of $\ss_0$ has eigenvectors $[E]$ and $[O]$ associated to the eigenvalue $\ll=1$, and acts as multiplication by $-1$ on $V^{\perp [E],[O]}$.
\end{lemma}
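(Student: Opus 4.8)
The plan is to exploit that $\ss_0$ is an involution. Since $\ss_0^2=\mathrm{id}$ on $X$, the pullback satisfies $(\ss_0^*)^2=\mathrm{id}$ on $\Pic(X)\otimes\Bbb R$, so $\ss_0^*$ is a diagonalizable linear map with eigenvalues $\pm1$ that preserves the intersection pairing; consequently its $+1$- and $-1$-eigenspaces are mutually orthogonal. The two claimed eigenvectors are then immediate from the geometry: negation carries each fiber to itself, so $\ss_0$ preserves the fibration and $\ss_0^*[E]=[E]$; and negation fixes the zero section $O$ pointwise (each $O_E$ is the identity of its fiber, hence $\ss_0$-fixed), so $\ss_0(O)=O$ and $\ss_0^*[O]=[O]$. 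The Gram matrix of $\{[E],[O]\}$ has determinant $-1$, so $\Pic(X)\otimes\Bbb R$ splits orthogonally as $\mathrm{span}\{[E],[O]\}\perp V^{\perp[E],[O]}$, with $V^{\perp[E],[O]}$ negative definite. Because the eigenspaces are orthogonal and $[E],[O]$ already exhaust the $+1$-eigenspace inside $\mathrm{span}\{[E],[O]\}$, it remains only to show $\ss_0^*$ has no $+1$-eigenvector in $V^{\perp[E],[O]}$, i.e. that $\ss_0^*=-\mathrm{id}$ there.

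For this last step I would use the translation subgroup $G$. Since $\ss_0^*$ fixes both $[O]$ and $[E]$ it fixes $P=[O]+[E]$, so it preserves $V^{\perp E,P}=V^{\perp[E],[O]}$ and restricts there to an origin-fixing orthogonal involution $L$ of the Euclidean model $\partial\H_E$. By the discussion following Lemma~\ref{l1.3}, each generator $t_i$ of $G$ equals $T_{\vc v_i}$, and since $G$ has maximal rank $\rho-2=\dim V^{\perp[E],[O]}$, the directions $\{\vc v_i\}$ form a basis of $V^{\perp[E],[O]}$. The key geometric input is that negation inverts fiber-translations: on each fiber, conjugating addition by $R$ by the map $P\mapsto -P$ yields addition by $-R$, so $\ss_0$ conjugates the translation realizing $t_i$ to its inverse and hence $\ss_0^*\,T_{\vc v_i}\,(\ss_0^*)^{-1}=T_{-\vc v_i}$. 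On the other hand, conjugating a Euclidean translation by the origin-fixing orthogonal map $L$ produces the translation $T_{L\vc v_i}$. Comparing the two, $L\vc v_i=-\vc v_i$ for every $i$, and since the $\vc v_i$ span we conclude $L=-\mathrm{id}$, as desired.

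The step I expect to be the main obstacle is justifying the conjugation identity $\ss_0^*\,t_i\,(\ss_0^*)^{-1}=t_i^{-1}$ \emph{without} presupposing $\tau_{i*}=t_i$, which is itself part of Theorem~\ref{t1}. The difficulty is that an element of $\CC_{[E]}$ realizing $t_i$ arises from an automorphism preserving the fibration but need not a priori be a pure fiber-translation (it could carry a base-automorphism or a zero-section-fixing component), and negation conjugates only genuine translations to their inverses. To close this gap I would argue that the maximal-rank abelian hypothesis forces the Mordell--Weil part of $\CC_{[E]}$ to have full rank $\rho-2$; by Shioda--Tate the trivial lattice is then exactly $\mathrm{span}\{[O],[E]\}$ (no reducible fibers contribute classes orthogonal to $[O]$ and $[E]$), so each $t_i$ really is the pullback of a fiber-translation and $V^{\perp[E],[O]}$ is spanned by section classes on which negation acts as $-1$. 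Equivalently, and bypassing the translation identity altogether, one observes that any $\ss_0^*$-fixed class in $V^{\perp[E],[O]}$ would have to be assembled from negation-invariant components of reducible fibers, which the hypothesis excludes; verifying this exclusion carefully is where the real work lies, and it is exactly the point at which the maximal-rank assumption on $G$ is indispensable.
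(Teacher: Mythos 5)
Your reduction is sound as far as it goes: the involution argument, the orthogonality of the $\pm1$-eigenspaces, the identification of $[E]$ and $[O]$ as $+1$-eigenvectors, and the reformulation of the remaining claim as $\ss_0^*\,t_i\,(\ss_0^*)^{-1}=t_i^{-1}$ for each $i$ (via the conjugation rule $\gamma T_{\vc v}\gamma^{-1}=T_{\gamma\vc v}$ for $\gamma$ fixing $E$) are all correct. But the step you flag as the ``main obstacle'' is in fact the entire content of the lemma, and your proposal does not close it. The identity $\ss_0\tau_i\ss_0^{-1}=\tau_i^{-1}$ holds for the fiberwise translation $\tau_i$, but transferring it to $t_i$ requires $\tau_{i*}=t_i$, which in the paper is \emph{deduced from} Lemmas \ref{l2.1} and \ref{l2.2}; so that route is circular, as you suspect. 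Your proposed patch --- that the maximal-rank hypothesis on $G$ forces, via Shioda--Tate, the trivial lattice to be exactly $\mathrm{span}\{[O],[E]\}$ and each $t_i$ to be realized by a genuine fiber translation --- is itself a nontrivial claim that you only assert: a priori the elements of $\CC_{[E]}$ acting as translations on $\partial\H_E$ need not come from Mordell--Weil translations (they could involve base automorphisms or permutations of components of reducible fibers), and ruling this out is not obviously easier than the lemma itself. As written, the proof has a genuine gap exactly at the point where the hypothesis on $G$ must be used.

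The paper's argument sidesteps all of this and is worth comparing. It supposes $\vc w\in V^{\perp[E],[O]}$ is a $+1$-eigenvector, scales it to an integral combination of the $\vc v_i$ so that $T_{m\vc w}\in\CC_{[E]}$ for all $m$, and considers the classes $D_{m\vc w}=T_{m\vc w}([O])=[O]+c_{m\vc w}[E]+m\vc w$. Each is a wall of the ample cone, hence represented by a $-2$ curve (a section), and each is visibly fixed by $\ss_0^*$, hence the curve itself is $\ss_0$-invariant. Its intersection point with a fiber $E$ is therefore fixed by negation, i.e.\ is $2$-torsion; on a generic fiber these points are infinite in number, contradicting $\#E[2]\le 4$. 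Note that this argument never needs to identify \emph{which} automorphism induces $T_{m\vc w}$ --- only that some automorphism does, so that $D_{m\vc w}$ is an actual curve. If you want to rescue your approach, you would need to supply an independent proof that the section $\ss_0(D_i)$ has class $[O]+c[E]-\vc v_i$ (essentially the Shioda--Tate homomorphism statement), which is a legitimate alternative but is precisely the work you have deferred.
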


\begin{proof}  Note that $\ss_0^2$ is the identity on $X$, so $(\ss_0^*)^2=\Bbb I$.  Hence the minimal polynomial for $\ss_0^*$ divides $\ll^2-1$, so $\ss_0^*$ is diagonalizable over $\Bbb Q$ (thinking of $\ss_0^*$ acting on $\Pic(X)\otimes \Bbb Q$) with eigenvalues $\ll=\pm 1$.  Since $\ss_0E=E$ for any $E\in [E]$, and $\ss_0O=O$, both $[E]$ and $[O]$ are eigenvectors with associated eigenvalue $\ll=1$.  The space $V^{\perp [E],[O]}$ perpendicular to $\hbox{span}\{[E],[O]\}$ is invariant under the action of $\ss_0^*$.  To see this, suppose $\vc v\cdot [E]=0$.  Then
\[
\ss_0^*\vc v\cdot [E]=\vc v \cdot \ss_0^*[E]=\vc v\cdot [E]=0.
\]
We can therefore complete a basis of eigenvectors (over $\Bbb Q$) with eigenvectors in $V^{\perp [E],[O]}$.  Suppose there exists an eigenvector $\vc w\in V^{\perp [E],[O]}$ with eigenvalue $\ll=1$.  Without loss of generality (by multiplying by a suitable integer), we may assume $\vc w$ is an integral linear combination of $\{\vc v_1,..., \vc v_{\rho-2}\}$.  (This is where we use that $\CC_E$ has an Abelian subgroup of maximal rank.)  Then
\[
T_{m\vc w}\in \CC_E
\]
for all $m\in \Bbb Z$.  Let $D_{m\vc w}=T_{m\vc w}([O])$.  Then $D_{m\vc w}\cdot \vc x=0$ is a face of the ample cone, so $D_{m\vc w}$ represents a $-2$ curve on $X$ for all integers $m$.  Further, by Lemma \ref{l1.3},
\[
D_{m\vc w}=T_{m\vc w}([O])=[O]+c_{m\vc w}E+m\vc w,
\]
so $D_{m\vc w}$ is an eigenvector of $\ss_0^*$ with eigenvalue $\ll=1$.  That is, $\ss_0^*D_{m\vc w}=D_{m\vc w}$, and hence $\ss_0 (D_{m\vc w})=D_{m\vc w}$, where we are abusing notation as before and letting $D_{m\vc w}$ represent both a divisor class and the unique $-2$ curve on $X$ that represents the class. Let
\[
Q_{m\vc w,E}=D_{m\vc w}\cap E\in X,
\]
for any $E\in [E]$.  Then
\[
\ss_0(Q_{m\vc w,E})=\ss_0(D_{m\vc w}\cap E)=\ss_0(D_{m\vc w})\cap \ss_0(E)=D_{m\vc w}\cap E=Q_{m\vc w,E}.
\]
But from the definition of $\ss_0$,
\[
\ss_0(Q_{m\vc w,E})=-Q_{m\vc w,E}.
\]
Hence, $2Q_{m\vc w,E}=0$.  There exists a generic fiber $E$ where the $-2$ curves $D_{m\vc w}$ intersect $E$ at infinitely many points, so on this fiber we have found an infinite number of points of order $2$, a contradiction.  Thus, there is no eigenvector in $V^{\perp [E],[O]}$  associated to the eigenvalue $\ll=1$, so$V^{\perp [E],[O]}$ is the eigenspace associated to $\ll=-1$.
\end{proof}

Let
\begin{align*}
\ss_i: \qquad & X\to X \\
P &\mapsto Q_{i,E}-P,
\end{align*}
where $E$ is the unique fiber in $[E]$ that contains $P$.  Then $\ss_i\in \Aut(X)$.

\begin{lemma} \label{l2.2} The pullback $\ss_i^*$ of $\ss_i$ has eigenvectors $[E]$ and $[O]+D_i$ associated to $\ll=1$, and is $-1$ on the perpendicular space $V^{\perp [E],[O]+D_i}$.
\end{lemma}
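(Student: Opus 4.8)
The plan is to follow the template of Lemma~\ref{l2.1} as far as it will go and then isolate the one place where the argument must genuinely change. First I would record that $\ss_i$ is an involution: $\ss_i(\ss_i(P))=Q_{i,E}-(Q_{i,E}-P)=P$, so $(\ss_i^*)^2=\Bbb I$ and $\ss_i^*$ is diagonalizable over $\Bbb Q$ with eigenvalues $\ll=\pm1$. Next I would locate the $\ll=1$ eigenvectors. Since $\ss_i$ carries each fiber to itself, $\ss_i^*[E]=[E]$. The essential new observation is that $\ss_i$ interchanges the two sections $O$ and $D_i$: evaluating at the distinguished points gives $\ss_i(O_E)=Q_{i,E}-O_E=Q_{i,E}$ and $\ss_i(Q_{i,E})=Q_{i,E}-Q_{i,E}=O_E$, so $\ss_i(O)=D_i$ and $\ss_i(D_i)=O$ as curves. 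Hence $\ss_i^*[O]=D_i$ and $\ss_i^*D_i=[O]$ (for an involution pullback and push-forward agree), and therefore $[O]+D_i$ is fixed, as claimed. Invariance of $V^{\perp [E],[O]+D_i}$ under $\ss_i^*$ then follows exactly as in Lemma~\ref{l2.1}, using that $\ss_i^*$ preserves the Lorentz product and fixes both $[E]$ and $[O]+D_i$.

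The crux is to show that $\ss_i^*$ acts as $-1$ on $V^{\perp [E],[O]+D_i}$, i.e. that there is no $\ll=1$ eigenvector there. Here I would first flag that the verbatim transcription of Lemma~\ref{l2.1} breaks down. If $\vc w\in V^{\perp [E]}$ were fixed by $\ss_i^*$, one would like to manufacture $\ss_i$-fixed $-2$ curves $T_{\vc w}^m([O])=[O]+c_m[E]+m\vc w$ and extract fixed points on a generic fiber; but because $\ss_i$ sends $O$ to $D_i$ one computes $\ss_i^*\bigl(T_{\vc w}^m([O])\bigr)=D_i+c_m[E]+m\vc w=T_{\vc w}^m([O])+\vc v_i\neq T_{\vc w}^m([O])$, so these curves are \emph{not} $\ss_i$-fixed. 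Consequently the two-torsion contradiction of Lemma~\ref{l2.1}, which there produced infinitely many solutions of $2R=0$ on a single fiber, does not materialize, and the analogous equation $2R=Q_{i,E}$ never gets forced.

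To get around this I would factor $\ss_i=\tau_i\circ\ss_0$ (one checks $\tau_i(\ss_0(P))=Q_{i,E}-P=\ss_i(P)$), so that on $\Pic(X)\tensor\Bbb R$ the induced maps satisfy $\ss_{i*}=\tau_{i*}\ss_{0*}$. All three of $\ss_{i*}$, $\ss_{0*}$, $\tau_{i*}$ fix $[E]$, hence descend to the $(\rho-2)$-dimensional negative-definite quotient $V^{\perp [E]}/\Bbb R[E]$, on which they act as Euclidean isometries of $\partial\H_{[E]}$. Lemma~\ref{l2.1} tells us $\ss_{0*}$ is $-1$ on this quotient, so the target statement is equivalent to $\tau_{i*}$ acting as the identity there -- that is, to $\tau_i$ having trivial rotational part and acting as a genuine Euclidean translation on $\partial\H_{[E]}$. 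Granting this, $\ss_{i*}$ is $-1$ on the whole quotient; and since $[E]\cdot([O]+D_i)=2\neq0$ gives $[E]\notin V^{\perp [E],[O]+D_i}$, the invariance of $V^{\perp [E],[O]+D_i}$ upgrades the congruence $\ss_{i*}\vc u\equiv-\vc u\pmod{\Bbb R[E]}$ to the exact equality $\ss_{i*}\vc u=-\vc u$ on that space, completing the proof.

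The main obstacle is therefore precisely this translation (equivalently, unipotence) statement: that the fiberwise translation $\tau_i$, which fixes the single boundary point $[E]$, has no rotational component at that cusp. This is exactly the force of the first assertion $\tau_{i*}=t_i=T_{\vc v_i}$ of Theorem~\ref{t1}, and it is where the structure of the fibration enters -- translation by a section acts unipotently on $\Pic(X)$, so its semisimple part on the definite quotient is trivial. The conjugation relations $\ss_0\tau_i\ss_0=\tau_i^{-1}=\ss_i\tau_i\ss_i$, immediate from the definitions, are consistent with this and can be used to organize the verification; everything else is a direct transcription of Lemma~\ref{l2.1}.
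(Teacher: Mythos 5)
Your handling of the easy parts is correct and matches the paper: $\ss_i$ is an involution, $\ss_i^*[E]=[E]$, $\ss_i$ swaps $O$ and $D_i$ so $[O]+D_i$ is fixed, $V^{\perp[E],[O]+D_i}$ is invariant, and your final step upgrading $\ss_{i*}\vc u\equiv-\vc u\pmod{\Bbb R[E]}$ to equality on $V^{\perp[E],[O]+D_i}$ is sound. The problem is the core. You reduce the lemma, via $\ss_i=\tau_i\circ\ss_0$ and Lemma~\ref{l2.1}, to the claim that $\tau_{i*}$ acts trivially on $V^{\perp[E]}/\Bbb R[E]$, i.e.\ to the first assertion $\tau_{i*}=t_i$ of Theorem~\ref{t1} --- but in this paper that assertion is \emph{deduced from} Lemmas~\ref{l2.1} and~\ref{l2.2}: the proof of $\tau_{i*}=t_i$ computes $\tau_{i*}=\ss_i^*\circ\ss_0^*$ using precisely the two eigenspace decompositions. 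So, read inside the paper's logical order, your argument is circular. The only independent justification you offer, that ``translation by a section acts unipotently on $\Pic(X)$,'' is exactly the content at stake and is asserted without proof or reference; note that discreteness plus infinite order alone does not rule out a screw motion (a rotational part of finite order) at the cusp $[E]$. What you have actually shown is that the lemma is \emph{equivalent}, given Lemma~\ref{l2.1}, to the triviality of that rotational part --- a useful reformulation, not a proof.

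That said, your diagnosis of why the Lemma~\ref{l2.1} template does not transcribe verbatim is apt, and it bears on the paper's own argument. The paper takes a putative $\ll=1$ eigenvector $\vc w\in V^{\perp[E],[O]+D_i}$, replaces it by $\vc w'=\vc w-(\vc w\cdot[O])[E]\in V^{\perp[E],[O]}$ (still $\ll=1$), forms the $-2$ classes $D_{m\vc w'}=T_{m\vc w'}([O])=[O]+c_m[E]+m\vc w'$, and asserts ``as before'' that $\ss_i^*D_{m\vc w'}=D_{m\vc w'}$, whence $\ss_i(Q_{m\vc w',E})=Q_{m\vc w',E}$ and $2Q_{m\vc w',E}=Q_{i,E}$ for infinitely many points on a generic fiber. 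But since $\ss_i^*[O]=D_i$, linearity gives $\ss_i^*D_{m\vc w'}=D_i+c_m[E]+m\vc w'=D_{m\vc w'}+(D_i-[O])\neq D_{m\vc w'}$, exactly as you compute; the justification available in Lemma~\ref{l2.1} (that $[O]$ is itself fixed) is absent here, so this step of the paper needs repair as well. In short: you have correctly located the genuine difficulty, but you resolve it only by appealing to a downstream theorem whose proof depends on this lemma, so your proposal does not constitute a proof.
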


\begin{proof}  The proof is similar to that of Lemma \ref{l2.1}.  While $\ss_iE=E$ as before, we note that $\ss_iO=D_i$ and $\ss_iD_i=O$, so $\ss_i^*([O]+D_i)=[O]+D_i$.  As before, the perpendicular space $V^{\perp [E],[O]+D_i}$ is invariant under $\ss_i^*$, so we can complete a basis of eigenvectors with elements in this space.  We assume there exists an eigenvector $\vc w\in V^{\perp [E],[O]+D_i}$ (over $\Bbb Q$) with associated eigenvalue $\ll=1$.  Let us write
\[
\vc w=w_O[O]+w_E[E] + \vc w',
\]
where $\vc w'\in V^{\perp [E],[O]}$.  Then
\begin{align*}
0=\vc w\cdot [E]&=w_O \\
\vc w\cdot [O]&=w_E,
\end{align*}
so
\[
\vc w'=\vc w-(\vc w\cdot [O])[E].
\]
Thus, $\vc w'$ is in the eigenspace for $\ll=1$.  As before, by multiplying by a suitable integer, we may take $\vc w'$ to be an integer linear combination of $\{\vc v_1,...,\vc v_{\rho-2}\}$.  Thus, as before, $T_{m\vc w'}\in \CC_E$ for all integers $m$, $D_{m\vc w'}=T_{m\vc w'}([O])$ is represented by a $-2$ curve on $X$, $\ss_i^*D_{m\vc w'}=D_{m\vc w'}$, and therefore
\[
\ss_i(Q_{m\vc w',E})=  Q_{m\vc w',E}.
\]
But
\[
\ss_i(Q_{m\vc w',E})=Q_i-Q_{m\vc w',E},
\]
so we get $2Q_{m\vc w',E}=Q_i$ for all integers $m$.  For any $E$, there are at most four solutions to $2P=Q_i$, but for a generic fiber (all but finitely many), the points $Q_{m\vc w',E}$ form an infinite set.  Thus, no such $\vc w$ can exist, so $V^{\perp [E],[O]+D_i}$ is the $(\ll=-1)$-eigenspace for $\ss_i^*$.
\end{proof}

We are now ready to prove the first assertion of Theorem \ref{t1}.

\begin{theorem}  The map $t_i\in \CC_E$ is the push forward of $\tau_i\in \Aut(X)$.
\end{theorem}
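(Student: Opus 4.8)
The plan is to realize $\tau_i$ as a composition of the two involutions $\ss_0$ and $\ss_i$, and then to read off its pushforward from the eigenstructure established in Lemmas \ref{l2.1} and \ref{l2.2}. First I would verify the fiberwise identity $\tau_i=\ss_i\circ\ss_0$: for any $P$ lying on the fiber $E$,
\[
\ss_i(\ss_0(P))=\ss_i(-P)=Q_{i,E}-(-P)=P+Q_{i,E}=\tau_i(P).
\]
Since $\ss_0$ and $\ss_i$ are involutions, pushforward and pullback coincide for each ($\ss_*=(\ss^*)^{-1}$ for an automorphism, and $(\ss^*)^{-1}=\ss^*$ when $\ss^2=\mathrm{id}$). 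By functoriality of the pushforward, $\tau_{i*}=\ss_{i*}\circ\ss_{0*}=\ss_i^*\circ\ss_0^*$, so it suffices to prove $\ss_i^*\circ\ss_0^*=T_{\vc v_i}$, recalling that $t_i=T_{\vc v_i}$ by the discussion following Lemma \ref{l1.3}.

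Both sides lie in $\O^+$, so I would check agreement on the nondegenerate plane $\mathrm{span}\{[E],[O]\}$ (Gram determinant $-1$) and on its orthogonal complement $V^{\perp[E],[O]}$, which together span $\Pic(X)\otimes\Bbb R$. On $[E]$ both maps are the identity, and on $[O]$ both give $D_i$: for $\ss_i^*\circ\ss_0^*$ one uses $\ss_0^*[O]=[O]$ (Lemma \ref{l2.1}) and $\ss_iO=D_i$, whence $\ss_i^*[O]=D_i$; for $T_{\vc v_i}$ this is the defining relation $D_i=t_i([O])$.

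The substantive step is the action on $\vc u\in V^{\perp[E],[O]}$. Here Lemma \ref{l2.1} gives $\ss_0^*\vc u=-\vc u$, and to evaluate $\ss_i^*\vc u$ I would decompose $\vc u$ along the eigenspaces of $\ss_i^*$ supplied by Lemma \ref{l2.2}, writing $\vc u=a[E]+b([O]+D_i)+\vc u'$ with $\vc u'\in V^{\perp[E],[O]+D_i}$. Pairing against $[E]$ and against $[O]+D_i$ and using $\vc u\cdot[E]=\vc u\cdot[O]=0$ together with $([O]+D_i)\cdot[E]=2$ forces $b=0$ and $a=\tfrac12(\vc u\cdot D_i)$, so that $\ss_i^*\vc u=(\vc u\cdot D_i)[E]-\vc u$ and hence $\ss_i^*\ss_0^*\vc u=\vc u-(\vc u\cdot D_i)[E]$. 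On the other side, the formula of Lemma \ref{l1.3} with $\vc u\cdot[E]=0$ and $\vc v_i\cdot\vc u=(D_i-[O])\cdot\vc u=D_i\cdot\vc u$ gives $T_{\vc v_i}(\vc u)=\vc u-(\vc u\cdot D_i)[E]$, matching exactly.

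The main obstacle is precisely this last computation: the two involutions are adapted to different nondegenerate planes ($\mathrm{span}\{[E],[O]\}$ versus $\mathrm{span}\{[E],[O]+D_i\}$), so a vector of $V^{\perp[E],[O]}$ must be re-expanded in the eigenbasis of $\ss_i^*$ before $\ss_i^*$ can be applied, and one must confirm the resulting linear map coincides with the explicit translation formula $T_{\vc v_i}$ rather than some other isometry fixing $[E]$. Once the $([O]+D_i)$- and $[E]$-components are pinned down from the orthogonality conditions $\vc u\cdot[E]=\vc u\cdot[O]=0$, the identity falls out, and agreement on the complementary plane $\mathrm{span}\{[E],[O]\}$ completes the proof that $\tau_{i*}=t_i$.
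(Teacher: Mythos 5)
Your proof is correct and follows essentially the same route as the paper: factor $\tau_i=\ss_i\circ\ss_0$, use the involution property to get $\tau_{i*}=\ss_i^*\circ\ss_0^*$, apply the eigenspace descriptions of Lemmas \ref{l2.1} and \ref{l2.2}, and match the result against $T_{\vc v_i}=t_i$. The only (harmless) difference is organizational: you verify agreement separately on $\mathrm{span}\{[E],[O]\}$ and on $V^{\perp [E],[O]}$, while the paper computes both maps on a general class $A$ and reconciles the $[E]$-coefficients by pairing with $D_i$.
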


\begin{proof}  We note that $\tau_i=\ss_i\circ \ss_0$.  Thus $\tau_i^*=\ss_0^*\circ \ss_i^*$, so
\[
\tau_{i*}=(\ss_0^*\circ \ss_i^*)^{-1}=\ss_i^*\circ \ss_0^*,
\]
where we have used that $\ss_i^2=\ss_0^2=id$.  We use Lemmas \ref{l2.1} and \ref{l2.2} to calculate $\tau_{i*}$.  Given $A\in \Pic(X)\otimes \Bbb Q$, let us write
\[
A=a_O[O]+a_E[E]+\vc a,
\]
where $\vc a\in V^{\perp [E],[O]}$.  We can write
\[
\vc a=\frac 12 \vc a\cdot ([O]+D_i)+\vc a',
\]
where $\vc a'\in V^{\perp [E],[O]+D_i}$, from which it follows
\[
\ss_i^*\circ \ss_0^* A=A+a_O(D_i-[O])-\vc a\cdot ([O]+D_i)[E].
\]
Noting that $a_O=A\cdot E$ and substituting $v_i$, we get
\[
\tau_{i*}(A)=A+(A\cdot E)\vc v_i+(2+D_i\cdot [O]-\vc a\cdot([O]+D_i))[E].
\]
Because $\CC$ is discrete and $t_i$ has infinite order in $\CC_E$, it must be a translation on $\E'$.  Thus $t_i=T_{\vc v}$ for some $\vc v\in V^{\perp [E],[O]}$.  Using $D_i=t_i([O])$ and Lemma \ref{l1.3}, we conclude $\vc v=\vc v_i$.  Thus,
\[
t_i(A)=A+(A\cdot E)\vc v_i +\left(A\cdot \vc v_i+\frac 12 (A\cdot [E])(\vc v_i\cdot \vc v_i)\right)[E].
\]
We can verify directly that the coefficients of $[E]$ in $\tau_{i*}(A)$ and $t_i(A)$ are equal, or we can observe that both $\tau_{i*}$ and $t_i$ are isometries, so
\begin{align*}
\tau_{i*}(A)\cdot D_i&=A\cdot\tau_{i*}^{-1}(D_i)=A\cdot [O] \\
t_i(A)\cdot D_i&=a\cdot t_i^{-1}(D_i)=A\cdot [O].
\end{align*}
Thus
\[
\tau_{i*}(A)\cdot D_i=t_i(A)\cdot D_i,
\]
and since $[E]\cdot D_i=1\neq 0$, we get that the coefficients of $[E]$ are equal.  Hence, $\tau_{i*}=t_i$, as claimed.
\end{proof}

\subsection{The Neron-Tate Pairing}  In this section, we use vector heights to calculate $\langle Q_{i,E},Q_{j,E}\rangle$.  Let us choose the basis
\[
\D=\{[E],[O]+[E],\vc v_1,...,\vc v_{\rho-2}\}.
\]
Then the dual basis is
\[
\D^*=\{[O]+[E],[E],\vc v_1^*,...,\vc v_{\rho-2}^*\}
\]
where $\hbox{span}\{\vc v_1^*,...,\vc v_{\rho-2}^*\}=V^{\perp [E],[O]}$.  Let us define a projection of $X$ onto the section $O$ by
\begin{align*}
\pi: \qquad  X &\to O \\
P &\mapsto O_E
\end{align*}
where $E$ is the unique fiber that contains $P$.  Let us define a logarithmic height $h$ on the section $O$.  Note that the pull back $\pi^*$ of a point in $O$ is a fiber in $[E]$, so $h\circ \pi$ is a Weil height with respect to $[E]$.    Let us use $h_{[E]}=h\circ \pi$ in our definition of the vector height, so $\vc h(P)\cdot [E]=h_{[E]}(P)$.  Now suppose $\ss\in \langle \ss_0,\ss_1,..., \ss_{\rho-2}\rangle$.  Since $\ss$ fixes $E$ for every $E\in [E]$, we know
\[
h_{[E]}(\ss P)=h_{[E]}(P).
\]
On the other hand,
\begin{align*}
h_{[E]}(\ss P)&=\vc h(\ss P)\cdot [E]=(\ss_*\vc h(P) +\vc O(1))\cdot [E] \\
&=\ss_*\vc h(P)\cdot [E]+\vc O(1)\cdot [E] \\
&=\vc h(P)\cdot \ss^*[E]+\vc O(1)\cdot [E] \\
&=\vc h(P)\cdot [E] +\vc O(1)\cdot [E] \\
&=h_{[E]}(P)+\vc O(1)\cdot [E].
\end{align*}
Thus, the error term $\vc O(1)$ for $\ss$ lies in $V^{\perp [E]}$, since it satisfies $\vc O(1)\cdot [E]=0$.

\begin{lemma}  Suppose $\vc u, \vc v \in V^{\perp [E]}$.  Then
\[
|\vc u\cdot \vc v| \leq | ||\vc u|| ||\vc v|| |,
\]
where $||\vc u||=\sqrt{\vc u\cdot \vc u}$.
\end{lemma}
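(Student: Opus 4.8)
The plan is to recognize this as the Cauchy--Schwarz inequality for the Lorentz product restricted to $V^{\perp [E]}$, where that product is negative semi-definite. The only real subtlety is the bookkeeping forced by the notation $||\vc u||=\sqrt{\vc u\cdot \vc u}$, which becomes imaginary as soon as $\vc u\cdot \vc u\le 0$, so the absolute value signs on the right-hand side must be handled with care.

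First I would establish that the Lorentz product is negative semi-definite on $V^{\perp [E]}$, with radical $\Bbb R[E]$. Taking $P=[O]+[E]$ (so that $[E]\cdot P=1\neq 0$), every $\vc x\in V^{\perp [E]}$ decomposes uniquely as $\vc x=c[E]+\vc x_0$ with $c=(\vc x\cdot P)/([E]\cdot P)$ and $\vc x_0\in V^{\perp [E],P}$. Since $[E]\cdot [E]=0$ and $[E]\cdot \vc x_0=0$, a direct expansion gives $\vc x\cdot \vc y=\vc x_0\cdot \vc y_0$ for any two such vectors, so the product on $V^{\perp [E]}$ sees only the $V^{\perp [E],P}$ components. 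As shown in Subsection~\ref{s1.2}, the product is negative definite on $V^{\perp [E],P}$; hence it is negative semi-definite on $V^{\perp [E]}$, and in particular $\vc u\cdot \vc u\le 0$ and $\vc v\cdot \vc v\le 0$.

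Next I would run the standard discriminant argument on $f(t)=(\vc u+t\vc v)\cdot (\vc u+t\vc v)=(\vc v\cdot \vc v)t^2+2(\vc u\cdot \vc v)t+\vc u\cdot \vc u$, which is $\le 0$ for all real $t$ by semi-definiteness. If $\vc v\cdot \vc v=0$, then $f$ is affine and bounded above by $0$, which forces $\vc u\cdot \vc v=0$ and the inequality holds trivially. If $\vc v\cdot \vc v<0$, then $f$ is a downward parabola that stays $\le 0$ only when its discriminant is nonpositive, i.e. $4(\vc u\cdot \vc v)^2-4(\vc v\cdot \vc v)(\vc u\cdot \vc u)\le 0$, which yields $(\vc u\cdot \vc v)^2\le (\vc u\cdot \vc u)(\vc v\cdot \vc v)$.

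Finally I would reconcile this with the stated norm notation, which I expect to be the one genuinely fiddly point rather than a deep obstacle. Since $\vc u\cdot \vc u$ and $\vc v\cdot \vc v$ are both nonpositive, their product is nonnegative and equals $|\,||\vc u||\,||\vc v||\,|^2$; taking square roots of $(\vc u\cdot \vc v)^2\le (\vc u\cdot \vc u)(\vc v\cdot \vc v)$ then gives $|\vc u\cdot \vc v|\le \sqrt{(\vc u\cdot \vc u)(\vc v\cdot \vc v)}=|\,||\vc u||\,||\vc v||\,|$, as claimed. Thus the whole content is Cauchy--Schwarz after flipping the sign of the negative definite form; the work lies entirely in verifying the semi-definiteness from the light-cone geometry and in confirming that the imaginary values of $||\vc u||$ and $||\vc v||$ combine, after the outer absolute value, to the well-defined nonnegative quantity $\sqrt{(\vc u\cdot \vc u)(\vc v\cdot \vc v)}$, while not forgetting to dispatch the degenerate case $\vc v\cdot \vc v=0$ separately.
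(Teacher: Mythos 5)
Your proof is correct and takes essentially the same approach as the paper: decompose each element of $V^{\perp [E]}$ as a multiple of $[E]$ plus a component in the negative definite subspace $V^{\perp [E],P}$ (the paper writes $\vc u=u_E[E]+\vc u'$ with $\vc u'\in V^{\perp [E],[O]}$, which is the same subspace since $P=[O]+[E]$), observe that the product depends only on those components, and invoke Cauchy--Schwarz. The paper's two-line proof leaves the discriminant argument, the degenerate case, and the bookkeeping with the imaginary norms implicit; you have simply spelled them out.
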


\begin{proof}  Let us write $\vc u=u_E[E]+\vc u'$, etc., with $\vc u'\in V^{\perp [E],[O]}$.  Then
\[
\vc u\cdot \vc v=\vc u'\cdot \vc v',
\]
from which the result follows, since $\cdot$ is negative definite on $V^{\perp [E],[O]}$.
\end{proof}

Let $\vc v\in \vc v_1\Bbb Z\oplus ...\oplus \vc v_{\rho-2}\Bbb Z$.  Let $\tau_{\vc v}\in \Aut(X)$ be the canonical automorphism with $\tau_{\vc v *}=T_{\vc v}$.  (That is, $\tau_{\vc v}(P)=P+Q_{\vc v,E}$ where $E$ is the fiber in $[E]$ that contains $P$, $Q_{\vc v,E}=E\cap D_{\vc v}$ and $D_{\vc v}=T_{\vc v}([O])$.)  Then
\[
\vc h(\tau_{\vc v} P)=T_{\vc v}\vc h(P)+\vc O(1),
\]
where the error term is bounded.  Let us decompose the error terms into two parts, $\vc O(1)=\vc O'(1)+O(1)[E]$, where $\vc O'(1)\in V^{\perp [E],[O]}$.  Let $M$ bound both $| ||\vc O'(1)|| |$ and $|O(1)|$.  (Note that $M$ depends on $\vc v$, but not $P$.)  We drop the prime notation $\vc O'(1)$ in the following:

\begin{lemma}  For fixed $\vc v\in \vc v_1\Bbb Z\oplus ... \oplus \vc v_{\rho-2}\Bbb Z$,
\[
\vc h(\tau_{\vc v}^n P)=T_{\vc v}^n\vc h(P)+\vc O(n)+O(n^2)[E],
\]
where $\vc O(n)\in V^{\perp [E],[O]}$, $|||\vc O(n)|||$ is bounded by $Mn$ and the scalar error term $O(n^2)$ is bounded by $M|||\vc v|||n^2$.
\end{lemma}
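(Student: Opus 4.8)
The plan is to iterate the single-step relation $\vc h(\tau_{\vc v}P)=T_{\vc v}\vc h(P)+\vc O(1)$ by induction on $n$, the base case $n=1$ being precisely that relation together with the preceding decomposition $\vc O(1)=\vc O'(1)+O(1)[E]$. The one thing I need beyond routine bookkeeping is an exact description of how the isometry $T_{\vc v}$ (equivalently $T_{\vc v}^{\,k}=T_{k\vc v}$) moves the error terms, every one of which lives in $V^{\perp [E]}=V^{\perp [E],[O]}\oplus \Bbb R[E]$.

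So first I would record the action of $T_{\vc v}$ on that subspace. Taking $\vc v\in V^{\perp [E],[O]}$ (legitimate, since $T_{\vc v}=T_{\vc v+a[E]}$), Lemma \ref{l1.3} gives $T_{\vc v}([E])=[E]$ and $T_{\vc v}(\vc e)=\vc e-(\vc e\cdot \vc v)[E]$ for $\vc e\in V^{\perp [E],[O]}$, whence for any scalar $\ee$,
\[
T_{\vc v}^{\,k}(\vc e+\ee[E])=\vc e+(\ee-k\,\vc e\cdot \vc v)[E].
\]
Thus $T_{\vc v}$ is a shear along $[E]$: it leaves the $V^{\perp [E],[O]}$-component untouched and only shifts the $[E]$-coefficient.

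Next I would unroll the recursion. Writing the single-step error as $\vc E(P)=\vc h(\tau_{\vc v}P)-T_{\vc v}\vc h(P)=\vc e(P)+\ee(P)[E]$ with $|||\vc e(P)|||\le M$ and $|\ee(P)|\le M$ uniformly in $P$, a telescope gives
\[
\vc h(\tau_{\vc v}^{\,n}P)=T_{\vc v}^{\,n}\vc h(P)+\sum_{k=0}^{n-1}T_{\vc v}^{\,k}\,\vc E(\tau_{\vc v}^{\,n-1-k}P).
\]
Setting $\vc e_k+\ee_k[E]:=\vc E(\tau_{\vc v}^{\,n-1-k}P)$ and applying the shear formula to each summand, the total error has $V^{\perp [E],[O]}$-part $\sum_{k=0}^{n-1}\vc e_k$ and $[E]$-coefficient $\sum_{k=0}^{n-1}(\ee_k-k\,\vc e_k\cdot \vc v)$.

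It then remains to estimate. The triangle inequality on the negative-definite space $V^{\perp [E],[O]}$ gives $|||\sum_k \vc e_k|||\le \sum_k|||\vc e_k|||\le Mn$, which is the $\vc O(n)$ term. For the $[E]$-coefficient, the Cauchy--Schwarz lemma gives $|\vc e_k\cdot \vc v|\le |||\vc e_k|||\,|||\vc v|||\le M|||\vc v|||$, so each summand is bounded by $M+kM|||\vc v|||$; summing over $k$ produces a quantity of size $O(n^2)$ whose leading coefficient is controlled by $M|||\vc v|||$, as claimed. The step I expect to demand the most care is exactly this shear bookkeeping: it is the fact that $T_{\vc v}^{\,k}$ preserves the $V^{\perp [E],[O]}$-component while feeding a term $-k(\vc e_k\cdot \vc v)$ into the $[E]$-coefficient that turns the bounded single-step $[E]$-error into a contribution growing like $n^2$ after summation, while the $V^{\perp [E],[O]}$-error stays linear. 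A secondary point worth stating explicitly is the uniformity of $M$ in $P$: the single-step bound depends on $\vc v$ but not on the point, so it applies verbatim at every orbit point $\tau_{\vc v}^{\,n-1-k}P$, which is what makes the termwise estimates valid.
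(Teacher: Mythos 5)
Your proof is correct and uses the same ingredients as the paper's: the single-step relation $\vc h(\tau_{\vc v}P)=T_{\vc v}\vc h(P)+\vc O(1)$ with $M$ uniform in $P$, the observation that $T_{\vc v}$ acts on $V^{\perp [E]}$ as a shear fixing $[E]$ and pushing a $-(\vc e\cdot\vc v)[E]$ term out of the $V^{\perp[E],[O]}$-component, and the Cauchy--Schwarz lemma to bound $|\vc e\cdot\vc v|\le M|||\vc v|||$. The paper organizes this as an induction on $n$ rather than an unrolled telescoping sum, but that is a presentational difference only; your explicit summation even makes the quadratic growth of the $[E]$-coefficient slightly more transparent.
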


\begin{proof}[Proof (by induction on $n$)]  The base case is clear.  Consider
\begin{align*}
\vc h(\tau_{\vc v}^{n+1}P)&=T_{\vc v}\vc h(\tau_{\vc v}^n P)+\vc O(1) +O(1)[E]\\
&=T_{\vc v}(T_{\vc v}^n\vc h(P)+\vc O(n)+O(n^2)[E])+\vc O(1) + O(1)[E] \\
&=T_{\vc v}^{n+1}\vc h(P)+T_{\vc v}(\vc O(n))+O(n^2)T_{\vc v}([E]) +\vc O(1)+O(1)[E] \\
&=T_{\vc v}^{n+1}\vc h(P)+\vc O(n)+(\vc O(n)\cdot \vc v)[E] +O(n^2)[E]+\vc O(1) +O(1)[E]\\
&=T_{\vc v}^{n+1}\vc h(P)+\vc O(n+1)+O((n+1)^2)[E].
\end{align*}
In the last step, we used the previous lemma to conclude $|\vc O(n)\cdot \vc v|\leq M|||\vc v|||$.
\end{proof}

We are now ready to calculate the canonical height for a point $Q_{\vc v,E}$ in the subgroup of $E[k]$ generated by the $Q_{i,E}$'s.   Let $D$ be ample and use the height $h_D(P)=\vc h(P)\cdot D$.  Then
\begin{align*}
\hat h(Q_{\vc v,E})&=\lim_{n\to \infty}\frac{h_D([n]Q_{\vc v,E})}{n^2} \\
&=\lim_{n\to\infty} \frac 1{n^2}\vc h(\tau_{\vc v}^nO_E)\cdot D \\
&=\lim_{n\to \infty} \frac 1{n^2}\left(T_{\vc v}^n\vc h(O_E)+\vc O(n)+O(n^2)[E]\right)\cdot D \\
&=\lim_{n\to\infty} \frac1{n^2}\bigg(\vc h(O_E)-\Big(\vc h(O_E)\cdot n\vc v+\frac 12 (\vc h(O_E)\cdot [E])(n\vc v\cdot n\vc v)\Big)[E] \\
&  \qquad \qquad \qquad \qquad+(\vc h(O_E)\cdot [E])\vc v+\vc O(n)+O(n^2)[E]\bigg)\cdot D \\
&=\frac 12 \left((\vc h(O_E)\cdot[E])(\vc v\cdot \vc v)+O(1)\right)([E]\cdot D) \\
&=\frac 12 h(E)(\vc v\cdot \vc v)([E]\cdot D)+O(1).
 \end{align*}
The error term $O(1)$ is bounded by $M|||\vc v|||([E]\cdot D)$, and is independent of our choice of fiber $E$.  This result is similar to \cite{Tat83}*{Corollary 1}.

Finally, we calculate the Neron-Tate pairing:
\begin{align*}
\langle Q_{\vc v,E},Q_{\vc w,E}\rangle &=\hat h(Q_{\vc v,E}+Q_{\vc w,E})-\hat h(Q_{\vc v,E})-\hat h(Q_{\vc w,E}) \\
&=\hat h(Q_{\vc v+\vc w,E})-\hat h(Q_{\vc v,E})-\hat h(Q_{\vc w,E}) \\
&=\frac 12 h(E)([E]\cdot D)\left((\vc v+\vc w)\cdot (\vc v+\vc w)-\vc v\cdot \vc v-\vc w\cdot \vc w\right)+O(1) \\
&=h(E)([E]\cdot D)(\vc v\cdot \vc w)+O(1).
\end{align*}
In particular,
\[
\lim_{h(E)\to \infty} \frac{1}{h(E)([E]\cdot D)}\langle Q_{\vc v,E},Q_{\vc w,E}\rangle=\vc v\cdot \vc w.
\]
These last two results are similar to \cite{Sil94}*{Theorem 11.1 and Corollary 11.3.1}.  

This completes all the pieces of Theorem \ref{t1}.

\begin{bibdiv}

\begin{biblist}

\bib{Bar03}{article}{
   author={Baragar, Arthur},
   title={Canonical vector heights on algebraic $K3$ surfaces with Picard
   number two},
   journal={Canad. Math. Bull.},
   volume={46},
   date={2003},
   number={4},
   pages={495--508},
   issn={0008-4395},
   review={\MR{2011389 (2004i:11065)}},
   doi={10.4153/CMB-2003-048-x},
}

\bib{Bar11}{article}{
   author={Baragar, Arthur},
   title={The ample cone for a $K3$ surface},
   journal={Canad. J. Math.},
   volume={63},
   date={2011},
   number={3},
   pages={481--499},
   issn={0008-414X},
   review={\MR{2828530 (2012f:14071)}},
   doi={10.4153/CJM-2011-006-7},
}

\bib{Kov94}{article}{
   author={Kov{\'a}cs, S{\'a}ndor J.},
   title={The cone of curves of a $K3$ surface},
   journal={Math. Ann.},
   volume={300},
   date={1994},
   number={4},
   pages={681--691},
   issn={0025-5831},
   review={\MR{1314742 (96a:14044)}},
   doi={10.1007/BF01450509},
}

\bib{PS-S}{article}{
   author={Pjatecki{\u\i}-{\v{S}}apiro, I. I.},
   author={{\v{S}}afarevi{\v{c}}, I. R.},
   title={Torelli's theorem for algebraic surfaces of type ${\rm K}3$},
   language={Russian},
   journal={Izv. Akad. Nauk SSSR Ser. Mat.},
   volume={35},
   date={1971},
   pages={530--572},
   issn={0373-2436},
   review={\MR{0284440 (44 \#1666)}},
}

\bib{Sil83}{article}{
   author={Silverman, Joseph H.},
   title={Heights and the specialization map for families of abelian
   varieties},
   journal={J. Reine Angew. Math.},
   volume={342},
   date={1983},
   pages={197--211},
   issn={0075-4102},
   review={\MR{703488}},
   doi={10.1515/crll.1983.342.197},
}

\bib{Sil94}{book}{
   author={Silverman, Joseph H.},
   title={Advanced topics in the arithmetic of elliptic curves},
   series={Graduate Texts in Mathematics},
   volume={151},
   publisher={Springer-Verlag, New York},
   date={1994},
   pages={xiv+525},
   isbn={0-387-94328-5},
   review={\MR{1312368}},
   doi={10.1007/978-1-4612-0851-8},
}
	
\bib{Tat83}{article}{
   author={Tate, J.},
   title={Variation of the canonical height of a point depending on a
   parameter},
   journal={Amer. J. Math.},
   volume={105},
   date={1983},
   number={1},
   pages={287--294},
   issn={0002-9327},
   review={\MR{692114}},
   doi={10.2307/2374389},
}

\end{biblist}
\end{bibdiv}

\end{document}